\newtheorem{theorem}{Theorem}
\newtheorem{lemma}{Lemma}
\newtheorem*{proposition}{Proposition}
\theoremstyle{definition}
\newtheorem{remark}{Remark}
\numberwithin{equation}{section}
\begin{document}

\title[On invariant measure of Galton-Watson Branching Systems]
    {Further remarks on the explicit generating function expression of the invariant measure of critical Galton-Watson Branching Systems}

\author{{Azam~A.~Imomov}}
\address {Azam Abdurakhimovich Imomov
\newline\hphantom{iii} Karshi State University,
\newline\hphantom{iii} 17, Kuchabag st., 180100 Karshi city, Uzbekistan.}
\email{{imomov{\_}\,azam@mail.ru} \qquad ORCID: {https://orcid.org/0000-0003-1082-0144}}

\thanks{\copyright \ 2023 Imomov~A.A}

\subjclass[2010] {Primary 60J80; Secondary 60J85}

\keywords{Galton-Watson Branching System; Generating functions; Slow variation; Basic Lemma;
            Transition probabilities; Invariant measures; Limit theorems; Convergence rate.}


\begin{abstract}
    {Consider the critical Galton-Watson branching system with infinite variance of the offspring law. We provide an alternative
    arguments against what Slack~{\cite{Slack68}} did when it seeked for a local expression in the neighborhood of point $1$ of the
    generating function for invariant measures of the branching system. So, we obtain the global expression for all $s\in[0,1)$ of
    this generating function. A fundamentally improved version of the differential analogue of the Basic Lemma of the theory of
    critical branching systems is established. This assertion plays a key role in the formulation of the local limit theorem
    with explicit terms in the asymptotic expansion of local probabilities. We also determine the decay rate of the remainder
    term in this expansion.}
\end{abstract}

\maketitle

\section{Background, assumptions and purpose}

    Let ${Z_n}$ be the population size in the Galton-Watson Branching (GWB) System at time $n\in \mathbb{N}_0$, where
    $\mathbb{N}_0 = \left\{0 \right\} \cup \mathbb{N}$ and $\mathbb{N}= \left\{{1,2, \ldots} \right\}$. An evolution of the system
    will occur by the following scheme. Each individual lives a unit lifespan and at the end of his life produces $j$ progeny with
    probability $p_j$, $j\in \mathbb{N}_0$, independently of each other at that $p_0 >0$. Newborn individuals subsequently undergo
    reproduction obeying the offspring law $\left\{{p_j}\right\}$. The population sizes sequence can be represented by the
    following recurrent random sum of random variables:
\begin{equation}                    \label{1.1}
        Z_{n+1} = \xi_{n1}+\xi_{n2}+ \cdots + \xi_{nZ_{n}}
\end{equation}
    for any $n\in \mathbb{N}$, where $\xi_{nk}$ are independent random variables with the common distribution
\[
    \mathbb{P}\left\{{\xi_{nk}=j}\right\}=p_{j}
\]
    for all $k\in\mathbb{N}$. These variables are interpreted
    as the number of descendants of the $k$th individual in the $n$th generation.
    The GWB system defined above forms a reducible, homogeneous-discrete-time Markov chain with a state space
    consisting of two classes: $\mathcal{S}_{0}=\left\{0\right\}\cup{\mathcal{S}}$, where $\mathcal{S}\subset{\mathbb{N}}$,
    therein the state $\left\{0\right\}$ is absorbing, and $\mathcal{S}$ is the class of possible essential communicating
    states. Its $n$-step transition probabilities
\[
    P_{ij}{(n)} := \mathbb{P}\left\{{{Z_{n+k}=j} \bigm\vert Z_k =i}\right\}
    \qquad \text{for any}\quad k \in \mathbb{N}
\]
    are completely given by the  offspring law $\left\{{p_j}\right\}$. In fact, denoting
    ${\textsf{p}_{j}{(n)}}:={P_{1j}{(n)}}$, we observe that a probability Generating Function (GF)
\[
    \sum_{j \in {\mathcal{S}_{0}}}{P_{ij}{(n)}s^j} = \bigl[{f_n(s)}\bigr]^{i}
\]
    for any $i\in\mathcal{S}$ and $s \in [0, 1)$, where
\[
    f_n(s)=\sum_{j \in {\mathcal{S}_{0}}}{\textsf{p}_{j}{(n)}s^j}.
\]
    At that  the GF $f_n(s)$ is the $n$-fold iteration of the GF ${f}(s):= \sum_{j \in {\mathcal{S}_{0}}}{p_{j}s^j}$.

    The classification of $\mathcal{S}$ depends on the value of the parameter
\[
    m :=\sum_{j \in{\mathcal{S}}}{jp_j}= f'(1-),
\]
    the mean per-capita offspring number. The chain $\left\{{Z_n} \right\}$ is classified as sub-critical, critical and
    supercritical if $m < 1$, $m = 1$ and $m > 1$ respectively. Needless to say that $f_n(0)=\textsf{p}_0(n)$ is a vanishing
    probability of the system initiated by one individual, which is monotone and $\lim_{n\to\infty}\textsf{p}_0(n)=q$, where
    $q$ is called an extinction probability of the system and it is smallest nonnegative root of the fixed-point equation
    $f(q)=q$ on the domain of $\{s: s\in[0,1]\}$. Furthermore $f_n(s)\to{q}$ as $n\to\infty$ uniformly in $s\in[0,1)$;
    see \cite[Ch.I,~\S\S1--5]{ANey}.

    In the paper we focus on the critical case in which $q=1$.
    We assume that the offspring GF $f(s)$ for $s \in [0, 1)$ has the following form:
\[            \label{RAf}
    f(s)=s+(1-s)^{1+\nu} \mathcal{L}\left( {\frac{1}{1-s}}\right),   \eqno[\textsf {$f_\nu$}]
\]
    where $0 < \nu < 1$ and $\mathcal{L}(\ast)$ is slowly varying (SV) function at infinity. By the criticality
    of the system, the assumption {\hyperref[RAf]{$\left[{f_\nu}\right]$}} implies that $2b:=f''(1-)=\infty$.
    If $0<b <\infty$ then $\nu =1$ and $\mathcal{L}(t) \to {b}$ as $t \to \infty$.

    Further, putting into practice the function
\[
    \Lambda (y):={\frac{f(1-y)-(1-y)}{y}} = y^\nu \mathcal{L}\left({\frac{\,1\,}{y}}\right)
\]
    for $y\in(0,1]$, we rewrite and will use the condition {\hyperref[RAf]{$\left[{f_\nu}\right]$}} in the following form:
\[            \label{RAfL}
    f(s)-s=(1-s)\Lambda(1-s).             \eqno[\textsf {$f_\Lambda$}]
\]

    Slack~{\cite{Slack68}} has shown (see, also {\cite{Seneta67}}) that
\begin{equation}              \label{1.2}
    \bar{U}_n(s):= {\frac{f_n(s)-f_n(0)}{f_n(0)-f_{n-1}(0)}} \longrightarrow U(s)
    \qquad \text{as}\quad n \to \infty
\end{equation}
    for $s \in [0, 1]$, where the limit function $U(s)$ is the GF of the invariant measure
    for the system $\left\{{Z_n}\right\}$, and it satisfies the Abel equation
\begin{equation}              \label{1.3}
    U\bigl( f(s) \bigr) = U(s) + 1.
\end{equation}
    Moreover, in the case when {\hyperref[RAf]{$\left[{f_\nu}\right]$}} attends SV-function ${L}(\ast)$ at zero
    instead of $\mathcal{L}(\ast)$, Slack~{\cite{Slack68}} found that $U(s)$ admits a local expression
\[
    U(s)\sim\frac{1}{\nu(1-s)^{\nu}{L}(1-s)} \qquad \text{as}\quad s\uparrow{1}.
\]

    The mean value theorem implies that $f_{n+1}(0)-f_{n}(0) \sim f_{n}(0)-f_{n-1}(0)$ as $n\to\infty$
    and hence altering Slack's definition of $\bar{U}_n(s)$ to
\[              \label{SAU}
    U_n (s):= {\frac{f_n (s)-f_n (0)}{f_{n+1}(0)-f_{n}(0)}} \raise1.2pt\hbox{,}  \eqno[\textsf {$\mathcal{S}_{U}$}]
\]
    we se  that $\lim_{n\to\infty}{U_n (s)}=U(s)$. Then Slack's~{\cite{Slack68}} arguments, in contrast to the method
    in {\cite{Imomov19}}, made it easy to prove the following statement, called the Basic Lemma of the theory of
    critical GWB systems, which clearly shows an explicit asymptotic expression for the function
\[
    {R_{n}(s)}:= 1-f_n(s).
\]

\begin{lemma}[\textbf{Basic Lemma}~{\cite{ImomovSM23}}]                 \label{AALem1}
    If the condition {\hyperref[RAfL]{$\left[{f_\Lambda}\right]$}} holds then
\begin{equation}              \label{1.4}
    R_n(s)={\frac{\mathcal{N}\left(n \right)}{\left(\nu{n}\right)^{{1/\nu}}}} \cdot
    \left[{1-{\frac{\mathcal{U}_n(s)}{\nu{n}}}} \right],
\end{equation}
    where the function $\mathcal{N}(x)$ is SV at infinity and
\begin{equation}              \label{1.5}
    \mathcal{N}(n) \cdot \mathcal{L}^{{1/\nu}} \left({\frac{(\nu{n})^{{1/\nu}}}{\mathcal{N}(n)}} \right) \longrightarrow 1
    \qquad \text{as}\quad n \to \infty,
\end{equation}
    and the function $\mathcal{U}_n (s)$ has the following properties:
\begin{itemize}
\item  $\mathcal{U}_n (s) \to {U}(s)$ as $n \to \infty$ so that the equation \eqref{1.3} holds;
\item  $ \lim _{s \uparrow 1} \mathcal{U}_n (s) = \nu n$ for each  fixed $n \in \mathbb{N}$;
\item  $ \mathcal{U}_n (0) = 0$ for each  fixed $n \in \mathbb{N}$.
\end{itemize}
\end{lemma}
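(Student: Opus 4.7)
My plan is to derive the representation \eqref{1.4} in two stages: first pin down the one-variable asymptotics of $R_n(0)$ so as to identify $\mathcal{N}(n)$, and then define $\mathcal{U}_n(s)$ by a linear rearrangement so that \eqref{1.4} becomes a tautology; the three bullet-point properties will follow from known identities for $f_n$ and from the convergence $U_n\to U$ already recorded in \eqref{1.2}.

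The starting observation is the one-step recursion
\[
R_{n+1}(s)=R_n(s)\bigl[1-\Lambda\bigl(R_n(s)\bigr)\bigr],
\]
obtained by substituting $t=f_n(s)$ into the identity $1-f(t)=(1-t)[1-\Lambda(1-t)]$, a direct consequence of $[f_\Lambda]$. Raising this to the power $-\nu$ and Taylor-expanding $(1-x)^{-\nu}=1+\nu x+O(x^2)$ converts it into the additive form
\[
R_{n+1}(s)^{-\nu}-R_n(s)^{-\nu}=\nu\,\mathcal{L}\bigl(1/R_n(s)\bigr)+O\bigl(R_n(s)^{\nu}\mathcal{L}(1/R_n(s))^{2}\bigr),
\]
the error being of strictly lower order than the main term since $R_n(s)\downarrow 0$ for $s\in[0,1)$. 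Specialising to $s=0$, telescoping, and suppressing the error by a Karamata-type comparison, I obtain $R_n(0)^{-\nu}\sim\nu n\,\mathcal{L}\bigl(1/R_n(0)\bigr)$. Defining $\mathcal{N}(n):=(\nu n)^{1/\nu}R_n(0)$ rearranges this precisely into relation \eqref{1.5}, and the slow variation of $\mathcal{N}$ then follows via de Bruijn's conjugate construction applied to the SV function $\mathcal{L}^{1/\nu}$.

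With $R_n(0)=\mathcal{N}(n)/(\nu n)^{1/\nu}$ in hand, the trivial decomposition $R_n(s)=R_n(0)-[f_n(s)-f_n(0)]$ makes \eqref{1.4} an identity precisely when one sets
\[
\mathcal{U}_n(s):=\frac{\nu n\,[f_n(s)-f_n(0)]}{R_n(0)}.
\]
The boundary properties $\mathcal{U}_n(0)=0$ and $\lim_{s\uparrow 1}\mathcal{U}_n(s)=\nu n$ then follow immediately from the definition and from $f_n(1)=1$. For the convergence $\mathcal{U}_n(s)\to U(s)$, I apply the recursion at $s=0$ to get $f_{n+1}(0)-f_n(0)=R_n(0)\,\Lambda(R_n(0))=R_n(0)^{1+\nu}\mathcal{L}(1/R_n(0))$, and combine this with the asymptotic established above to conclude that $\nu n\,[f_{n+1}(0)-f_n(0)]/R_n(0)\to 1$. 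This identifies $\mathcal{U}_n(s)$ as asymptotically equal to $U_n(s)$ from $[\mathcal{S}_U]$, and \eqref{1.2} delivers the limit $U(s)$ satisfying Abel's equation \eqref{1.3}.

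The chief obstacle is the self-referential asymptotic $R_n(0)^{-\nu}\sim\nu n\,\mathcal{L}\bigl(1/R_n(0)\bigr)$: the telescoped identity only produces $R_n(0)^{-\nu}\sim\nu\sum_{k<n}\mathcal{L}(1/R_k(0))$, and collapsing this sum requires that $\mathcal{L}(1/R_k(0))$ depend regularly on $k$, which is part of what is being proved. I expect to break the circularity by a bootstrap: first extract from the recursion crude two-sided bounds of the form $R_n(0)\asymp n^{-1/\nu}\ell(n)$ for some SV $\ell$, then invoke the uniform convergence theorem for SV functions on compact ratio intervals to upgrade these bounds into the sharp asymptotic and certify the slow variation of $\mathcal{N}(n)$.
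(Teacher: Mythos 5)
This paper does not contain a proof of Lemma~\ref{AALem1} at all: the lemma is imported from \cite{ImomovSM23}, so your attempt can only be measured against the Slack-type argument that source follows and against the fragments the present paper does record (notably \eqref{3.7} and the Remark after Theorem~\ref{AATh2}). Your second stage is correct and essentially complete: the recursion $R_{n+1}(s)=R_n(s)\bigl[1-\Lambda\bigl(R_n(s)\bigr)\bigr]$ is an immediate consequence of {\hyperref[RAfL]{$\left[{f_\Lambda}\right]$}}; with $\mathcal{U}_n(s):=\nu n\,[f_n(s)-f_n(0)]/R_n(0)$ the representation \eqref{1.4} is indeed an identity once $R_n(0)=\mathcal{N}(n)/(\nu n)^{1/\nu}$ is known; the boundary values $\mathcal{U}_n(0)=0$ and $\lim_{s\uparrow1}\mathcal{U}_n(s)=\nu n$ are immediate; and the reduction of $\mathcal{U}_n(s)\to U(s)$ to \eqref{1.2} via $f_{n+1}(0)-f_n(0)=Q_n\Lambda(Q_n)$ and $\nu n\Lambda(Q_n)\to1$ is exactly right, as is reading \eqref{1.5} off the asymptotic $Q_n^{-\nu}\sim\nu n\,\mathcal{L}(1/Q_n)$ and getting the slow variation of $\mathcal{N}$ from de Bruijn conjugation.

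The genuine gap is the one you flag, and the repair you propose does not close it. First, the telescoped sum $\nu\sum_{k<n}\mathcal{L}(1/Q_k)$ cannot be collapsed to $\nu n\,\mathcal{L}(1/Q_n)$ by the uniform convergence theorem, because the ratios $Q_k/Q_n$ for $k\le n/2$ do not remain in a compact subset of $(0,\infty)$. Second, the proposed bootstrap input, two-sided bounds $R_n(0)\asymp n^{-1/\nu}\ell(n)$ \emph{with the same} SV factor $\ell$ on both sides, is not a cruder preliminary: it already encodes the regular variation of $Q_n$ in $n$, which is precisely what has to be proved, so the circularity survives. The standard way out is to telescope a quantity whose increments tend to a \emph{constant} rather than to a slowly varying sequence. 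Put $\Phi(y):=\int_y^{1}\frac{dt}{t\Lambda(t)}$. Since $Q_{n+1}/Q_n=1-\Lambda(Q_n)\to1$, the uniform convergence theorem gives $1/(t\Lambda(t))=(1+o(1))/(Q_n\Lambda(Q_n))$ uniformly for $t\in[Q_{n+1},Q_n]$, hence $\Phi(Q_{n+1})-\Phi(Q_n)=(Q_n-Q_{n+1})(1+o(1))/(Q_n\Lambda(Q_n))=1+o(1)$ and $\Phi(Q_n)\sim n$; Karamata's theorem gives $\Phi(y)\sim1/(\nu\Lambda(y))$ as $y\downarrow0$, and together these yield $\Lambda(Q_n)\sim1/(\nu n)$, i.e. your self-referential asymptotic $Q_n^{-\nu}\sim\nu n\,\mathcal{L}(1/Q_n)$. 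This is exactly the content of \eqref{3.7} at $s=0$. With that substitution for your first stage, the rest of your argument goes through.
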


    First direct result of the statement of the Basic Lemma~\ref{AALem1} is certainly
    an expression of survival probability of the family of one individual in a form of
\[
    {Q_n}:=\mathbb{P}\left\{{Z_{n}>0} \right\}={R_n(0)}=
    {\frac{\mathcal{N}(n)}{\left(\nu{n}\right)^{1/\nu}}}\raise1.2pt\hbox{.}
\]

    In our discussions an important role plays the following assertion,
    which we call the differential analogue of the Basic Lemma~\ref{AALem1}.
\begin{lemma}[{\cite{ImomovSM23}}]                 \label{AALem2}
    Let the condition {\hyperref[RAfL]{$\left[{f_\Lambda}\right]$}} holds. Then the following relation is true:
\begin{equation}              \label{1.6}
    {R'_n (s)} = -\psi_{n}(s)
    {\frac{R_n(s)\Lambda \bigl({R_n(s)}\bigr)}{(1-s) \Lambda (1-s)}} \raise1.2pt\hbox{,}
\end{equation}
    where the function $\psi_n(s)$ has following properties:
\begin{itemize}
\item  $\psi_n(s)$ is continuous in $s \in [0, 1)$, for all $n \in\mathbb{N}$ and
\[
    {\frac{f'(s)}{f'\bigl(f_{n}(s)\bigr)}} \leq \psi_{n}(s) \leq 1;
\]
\item  $\psi(s):=\lim _{n \to \infty} \psi_n(s)$ exists for $s \in [0, 1)$ and
\begin{equation*}
    f'(s) \leq \psi(s) \leq 1  \qquad  \mbox{and} \qquad  \psi(1-) = 1.
\end{equation*}
\end{itemize}
\end{lemma}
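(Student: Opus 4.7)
I would begin from the observation that the hypothesis $[f_\Lambda]$ is a functional identity valid at every $x\in[0,1]$: $(1-x)\Lambda(1-x)=f(x)-x$. Evaluating it at $x=s$ produces the denominator $(1-s)\Lambda(1-s)=f(s)-s$ of \eqref{1.6}, while evaluating at $x=f_n(s)$ transforms the numerator:
\[
R_n(s)\Lambda\bigl(R_n(s)\bigr)\;=\;f\bigl(f_n(s)\bigr)-f_n(s)\;=\;f_{n+1}(s)-f_n(s).
\]
Since $R'_n(s)=-f'_n(s)$, the relation \eqref{1.6} is therefore equivalent to the purely algebraic identity
\[
f'_n(s)\;=\;\psi_n(s)\cdot\frac{f_{n+1}(s)-f_n(s)}{f(s)-s},
\]
which has eliminated $\Lambda$ entirely and reduced the problem to a statement about iterates of $f$.

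Next I would exploit the commutativity $f_{n+1}=f_n\circ f$, writing $f_{n+1}(s)-f_n(s)=f_n(f(s))-f_n(s)$ and applying the mean value theorem to $f_n$ on $[s,f(s)]$ (note $f(s)>s$ for $s\in[0,1)$ in the critical regime); this produces $\theta_n(s)\in\bigl(s,f(s)\bigr)$ with $f_{n+1}(s)-f_n(s)=f'_n(\theta_n(s))\,(f(s)-s)$, and hence
\[
\psi_n(s)\;=\;\frac{f'_n(s)}{f'_n\bigl(\theta_n(s)\bigr)}.
\]
Both stated bounds then fall out cleanly. Convexity of $f_n$ (an iterate of the convex GF $f$) makes $f'_n$ nondecreasing, so $f'_n(s)\le f'_n(\theta_n)\le f'_n(f(s))$, giving immediately $\psi_n(s)\le1$. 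Differentiating the dual commutation $f_n\circ f=f\circ f_n$ in $s$ yields $f'_n(f(s))\,f'(s)=f'(f_n(s))\,f'_n(s)$, so
\[
\psi_n(s)\;\ge\;\frac{f'_n(s)}{f'_n(f(s))}\;=\;\frac{f'(s)}{f'(f_n(s))}.
\]
Continuity of $\psi_n$ on $[0,1)$ is immediate from analyticity of $f_n$ together with positivity of $f_{n+1}(s)-f_n(s)$ there.

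To establish existence of $\psi(s)=\lim_n\psi_n(s)$, I would combine the sandwich with Lemma~\ref{AALem1}. Since $f_n(s)\to1$ and $f'(1-)=m=1$, the lower bound $f'(s)/f'(f_n(s))$ converges to $f'(s)$, so every subsequential limit of $\psi_n(s)$ already lies in $[f'(s),1]$. To upgrade from a sandwich to a genuine limit I would substitute the Basic Lemma representation $R_n(s)=Q_n\bigl(1-\mathcal{U}_n(s)/(\nu n)\bigr)$ into
\[
\psi_n(s)\;=\;\frac{-R'_n(s)\,(1-s)\Lambda(1-s)}{R_n(s)\Lambda\bigl(R_n(s)\bigr)}
\]
and exploit regular variation of $\Lambda$ at $0^+$ (index $\nu$, slowly varying factor $\mathcal{L}$) together with $\mathcal{U}_n\to U$ to extract convergence of $\mathcal{U}'_n$ and hence of $\psi_n$. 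Finally $\psi(1-)=1$ follows by passing $s\uparrow1$ in the preserved lower bound: $f'(s)\to1$ forces $\psi_n(s)\to1$ uniformly in $n$, whence $\psi(1-)=1$.

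The principal obstacle is this third step: producing an \emph{actual} limit for $\psi_n(s)$ rather than merely a range of subsequential cluster points. The natural route is to transfer the convergence $\mathcal{U}_n\to U$ supplied by Lemma~\ref{AALem1} into convergence of $\mathcal{U}'_n$; since each $\mathcal{U}_n$ inherits monotonicity and convexity from $f_n$, a Weierstrass-type argument on compact subsets of $[0,1)$ (convex functions converging pointwise converge uniformly on compacta, with derivatives convergent at continuity points of the limit derivative) should justify termwise differentiation, and regular variation of $\Lambda$ then identifies the limit. Everything else in the statement is a straightforward consequence of convexity, the chain rule, and criticality $m=1$.
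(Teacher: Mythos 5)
Your argument is correct, but it is not the route this paper takes. The paper's own derivation of \eqref{1.6} (given inside the proof of Theorem~\ref{AATh3}, as the promised ``another proof'' of Lemma~\ref{AALem2}) starts from the Abel equation \eqref{1.3}: iterating gives $U\bigl(f_n(s)\bigr)=U(s)+n$, differentiation gives $f'_n(s)=U'(s)\big/U'\bigl(f_n(s)\bigr)$, and substituting the explicit formula \eqref{2.2} for $U'$ yields \eqref{1.6} with the closed form $\psi_n(s)=J(s)\big/J\bigl(R_n(s)\bigr)$; the limit $\psi(s)$ and the behaviour at $s\uparrow1$ then come from Lemma~\ref{AALem4}. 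You instead rewrite $[{f_\Lambda}]$ at $x=f_n(s)$ to turn the numerator into $f_{n+1}(s)-f_n(s)$, apply the mean value theorem to $f_n$ on $[s,f(s)]$, and use convexity of the iterates plus the commutation $f_n\circ f=f\circ f_n$ to get $\psi_n(s)=f'_n(s)/f'_n(\theta_n(s))$ together with both stated bounds --- an elementary, self-contained argument in the spirit of Kesten--Ney--Spitzer that delivers the sandwich $f'(s)/f'(f_n(s))\le\psi_n(s)\le1$ directly, whereas the paper's route buys an explicit expression for $\psi_n$ at the price of presupposing Theorem~\ref{AATh2}. The one place where your plan is thinner than the rest is the existence of $\lim_n\psi_n(s)$: the sandwich alone only confines subsequential limits to $[f'(s),1]$, and you close the gap by differentiating the convergence $U_n\to U$ termwise via the standard fact that pointwise-convergent convex functions have convergent derivatives wherever the limit is differentiable (which $U$ is, being a power series with nonnegative coefficients finite on $[0,1)$), combined with $R_n(s)/Q_n\to1$ and the uniform convergence theorem for the slowly varying factor of $\Lambda$; that chain is valid but should be written out, since it is doing the real work that the paper instead delegates to the explicit formula $\psi_n=J(s)/J(R_n(s))$ and Lemma~\ref{AALem4}.
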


    By definition  $\mathcal{L}\left({\lambda{x}}\right)\big/{\mathcal{L}(x)}\to{1}$ as $x\to\infty$ for each $\lambda >0$.
    Then it is natural that
\[
    \alpha_{\lambda}(x):={\frac{\mathcal{L}\left({\lambda{x}}\right)}{\mathcal{L}(x)}} - 1
\]
    decreases to zero with a certain speed rate at infinity. With a known rate of decrease of $\alpha_{\lambda}(x)$,
    the function $\mathcal{L}(\ast)$ is called SV at infinity with remainder; see {\cite[p.~185]{Bingham}}.

    The following statement is also known, which is an improved analogue of the Basic Lemma~\ref{AALem1}.
\begin{lemma}[{\cite{ImomovTukhtaev19}}]            \label{AALem3}
    Let the condition {\hyperref[RAfL]{$\left[{f_\Lambda}\right]$}} holds and
    $\alpha_{\lambda}(x)={o}\left({{{\mathcal{L}\left(x\right)}\big/{x^\nu}}}\right)$. Then
\begin{equation}              \label{1.7}
    {\frac{1}{{\Lambda}\bigl({R_{n}(s)}\bigr)}}- {\frac{1}{\Lambda(1-s)}} =
    \nu n +{\frac{1+\nu}{2}}\cdot \ln \Bigl[{\Lambda(1-s)}\nu{n}+1 \Bigr] + \rho_n(s),
\end{equation}
    {\it where $\rho_n(s)={o}\bigl(\ln n\bigr) + \sigma_n(s)$ and, $\sigma_n(s)$
    is bounded uniformly for $s \in [0, 1)$ and converges to the limit $\sigma(s)$
    as $n \to \infty $ which is a bounded function for all $s \in [0, 1)$.}
\end{lemma}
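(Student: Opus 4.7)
The plan is to convert the identity~\eqref{1.7} into a finite-difference recursion for $V_n(s):=1/\Lambda\bigl(R_n(s)\bigr)$, sum that recursion, and extract the logarithmic correction of order $\ln n$ from a Riemann-sum comparison. Since $R_0(s)=1-s$, the claim is equivalent to
\[
V_n(s)-V_0(s) \;=\; \nu n + \frac{1+\nu}{2}\ln\bigl[\Lambda(1-s)\nu n + 1\bigr] + \rho_n(s).
\]

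The first ingredient is the elementary recursion obtained by applying $[f_\Lambda]$ at the argument $f_n(s)$, namely
\[
R_{n+1}(s) \;=\; R_n(s)\bigl[1-\Lambda\bigl(R_n(s)\bigr)\bigr].
\]
Writing $V_{n+1}=R_{n+1}^{-\nu}\big/\mathcal{L}(1/R_{n+1})$ and expanding $(1-\Lambda(R_n))^{-\nu}$ by the binomial series up to second order, I would next control the ratio $\mathcal{L}(1/R_{n+1})/\mathcal{L}(1/R_n)=1+\eta_n(s)$ through the SV-with-remainder hypothesis $\alpha_\lambda(x)=o(\mathcal{L}(x)/x^\nu)$, which forces $\eta_n(s)=o(\Lambda(R_n(s)))$. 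Using the identities $V_n\Lambda(R_n)\equiv 1$ and $V_n\Lambda(R_n)^2\equiv\Lambda(R_n)$, the expansion collapses to
\[
V_{n+1}(s)-V_n(s) \;=\; \nu + \frac{\nu(\nu+1)}{2}\Lambda\bigl(R_n(s)\bigr) + \varepsilon_n(s),
\]
where $\varepsilon_n(s)$ can be arranged so that $\sum_{k\geq 0}\varepsilon_k(s)$ converges uniformly on $[0,1)$.

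Summing from $k=0$ to $n-1$ produces the main term $\nu n$ and reduces the problem to estimating $\sum_{k=0}^{n-1}\Lambda(R_k(s))$. A first pass through the recursion already yields $V_k(s)=\nu k+V_0(s)+o(k)$, so that $\Lambda(R_k(s))\simeq\Lambda(1-s)/[\nu k\Lambda(1-s)+1]$, and a Riemann-sum comparison with
\[
\int_0^n\frac{\Lambda(1-s)\,dx}{\nu x\,\Lambda(1-s)+1} \;=\; \frac{1}{\nu}\ln\bigl[\Lambda(1-s)\nu n + 1\bigr]
\]
produces, after multiplication by $\nu(\nu+1)/2$, exactly the coefficient $(1+\nu)/2$ in front of the logarithm. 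The Riemann-sum error together with the first-pass residue $o(k)$ combine into the bounded function $\sigma_n(s)\to\sigma(s)$ plus a genuinely $o(\ln n)$ tail, which is the announced form of $\rho_n(s)$.

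The main obstacle is uniformity in $s\in[0,1)$ as $s\uparrow 1$: here $\Lambda(1-s)$ is small and roughly $1/(\nu\Lambda(1-s))$ steps are required before the universal Basic Lemma~\ref{AALem1} asymptotics $R_k\sim\mathcal{N}(k)/(\nu k)^{1/\nu}$ take hold. The sum $\sum\Lambda(R_k(s))$ must therefore be split into an initial layer, where $R_k(s)$ remains comparable to $1-s$ and contributes an $s$-dependent but uniformly bounded piece absorbed into $\sigma(s)$, and a late layer, which produces the logarithm. Matching these two contributions into the single argument $\Lambda(1-s)\nu n+1$, rather than two separate logarithms with different arguments, is the delicate point, and it is precisely here that the quantitative hypothesis $\alpha_\lambda(x)=o(\mathcal{L}(x)/x^\nu)$ is essential for the residue in $\rho_n(s)$ to be kept at the $o(\ln n)$ level.
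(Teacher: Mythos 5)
This paper does not actually prove Lemma~\ref{AALem3}; it is imported verbatim from \cite{ImomovTukhtaev19}, so there is no in-text argument to compare yours against. Judged on its own terms, your architecture is sound: the exact recursion $R_{n+1}(s)=R_n(s)\bigl[1-\Lambda\bigl(R_n(s)\bigr)\bigr]$ is correct under $[f_\Lambda]$, the identity $V_n\Lambda(R_n)=1$ does kill the first-order term and produce $\nu n$, and the comparison of $\sum_{k}1/V_k(s)$ with $\int_0^n\bigl(\nu x+V_0(s)\bigr)^{-1}dx=\nu^{-1}\ln\bigl[\Lambda(1-s)\nu n+1\bigr]$ after a first bootstrap $V_k=\nu k+V_0(s)+o(k)$ yields the coefficient $(1+\nu)/2$ and the single logarithm with the correct argument. (The ``two layers'' you worry about are matched automatically once the sum is written as $\sum_k 1/(\nu k+V_0(s))$, since $V_0(s)=1/\Lambda(1-s)$ carries the entire $s$-dependence into the constant of integration; no separate matching argument is needed.)

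The genuine gap is in your treatment of the slowly-varying correction $\eta_n(s)=\mathcal{L}(1/R_{n+1})\big/\mathcal{L}(1/R_n)-1$. The hypothesis gives only $\eta_n=o\bigl(\Lambda(R_n)\bigr)$, so the corresponding term $V_n\eta_n=\eta_n/\Lambda(R_n)$ in your increment is merely $o(1)$ per step; summing $o(1)$ over $n$ steps yields $o(n)$, which is not a convergent series and swamps the $\tfrac{1+\nu}{2}\ln[\cdot]$ term you are trying to isolate. Your assertion that $\varepsilon_n(s)$ ``can be arranged'' to be uniformly summable is precisely the point requiring proof, and it does not follow from the per-term bound you state. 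The repair is to pass to the representation $\mathcal{L}(x)=C_{\mathcal{L}}+o\left(x^{-\nu}\right)$ (the $o$-analogue of \eqref{3.5}), write $V_n=R_n^{-\nu}/C_{\mathcal{L}}-g(R_n)+o\left(R_n^{\nu}\right)$ with $g(y):=y^{-\nu}\bigl(\mathcal{L}(1/y)-C_{\mathcal{L}}\bigr)\big/C_{\mathcal{L}}^{2}=o(1)$ as $y\downarrow 0$, and observe that the dangerous contribution telescopes to the bounded quantity $g(1-s)-g\bigl(R_n(s)\bigr)$ (absorbed into $\sigma_n(s)$), while the residue contributes $\sum_k o\left(R_k^{\nu}\right)=o(\ln n)$; an Abel summation of $\sum_k V_k\eta_k$ achieves the same. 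Without this extra step the error bookkeeping fails at exactly the order of the logarithmic term the lemma asserts.
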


    The peculiarity of the Lemma~{\ref{AALem2}} is that it perfectly generalizes an analogous statement established
    in {\cite[Theorem~1]{KNS}}, in which the offspring law variance was assumed to be finite and later refined under a third
    finite moment assumption in {\cite[p.~20]{Harris63}}. In both papers just mentioned, ${\nu}=1$ and ${\Lambda}(y)\equiv{y}$,
    and thereat ${f''(1-)n}\big/2$ appeared instead of the first term $\nu{n}$ and moreover, the subsequent tail terms
    are found on the right-hand side of \eqref{1.6}.

    In accordance with our purpose, we now recall the following theorem,
    which shows the explicit-integral form of the invariant measure GF $U(s)$.
\begin{theorem}[{\cite{ImomovSM23}}]               \label{AATh1}
    If condition {\hyperref[RAfL]{$\left[{f_\Lambda}\right]$}} holds and
    $\alpha_{\lambda}(x)={o}\left({{{\mathcal{L}\left(x\right)}\big/{x^\nu}}}\right)$, then
\begin{itemize}
\item [\textbf{(1)}] the GF $U(s)$ is
\begin{equation}                 \label{1.8}
    U(s) = \int_0^s {{\frac{\psi(y)} {(1-y) \Lambda (1-y)}}\,dy},
\end{equation}
    where $\psi(s)$ is continuous in $s \in [0, 1]$, and
\begin{equation*}
    f'(s) \leq \psi(s) \leq 1;
\end{equation*}
\item [\textbf{(2)}] the derivative $U'(s)$ has the following representation:
\begin{equation}                 \label{1.9}
    U'(s) = {{\psi{(s)}}\over {(1-s) \Lambda (1-s)}} \raise1.2pt\hbox{,}
\end{equation}
    where $\psi{(s)}=1+\mathcal{O}\bigl(\Lambda{(1-s)}\bigr)$ as $s \uparrow{1}$.
\end{itemize}
\end{theorem}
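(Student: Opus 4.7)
The plan is to work with the modified normalization $U_n(s)$ from {\hyperref[SAU]{$\left[{\mathcal{S}_{U}}\right]$}}, rewrite it purely in terms of the tail $R_n$, and identify its limit as an integral. The denominator simplifies immediately via {\hyperref[RAfL]{$\left[{f_\Lambda}\right]$}} applied at $\sigma=f_n(0)$:
\[
f_{n+1}(0)-f_n(0)=f\bigl(f_n(0)\bigr)-f_n(0)=R_n(0)\,\Lambda\bigl(R_n(0)\bigr).
\]
For the numerator I write $f_n(s)-f_n(0)=R_n(0)-R_n(s)=-\int_0^s R'_n(y)\,dy$ and substitute the differential Basic Lemma~\ref{AALem2}, obtaining
\[
U_n(s)=\int_0^s \psi_n(y)\,\frac{R_n(y)\,\Lambda\bigl(R_n(y)\bigr)}{R_n(0)\,\Lambda\bigl(R_n(0)\bigr)}\cdot\frac{dy}{(1-y)\Lambda(1-y)}.
\]

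Next I would establish pointwise convergence of the integrand. Lemma~\ref{AALem1} gives $R_n(y)/R_n(0)=1-\mathcal{U}_n(y)/(\nu n)\to 1$ for each $y\in[0,1)$, and Lemma~\ref{AALem3} (invoked through the remainder hypothesis on $\mathcal{L}$) yields $1/\Lambda\bigl(R_n(y)\bigr)-1/\Lambda\bigl(R_n(0)\bigr)=\mathcal{O}(\ln n)=o(\nu n)$, whence $\Lambda(R_n(y))/\Lambda(R_n(0))\to 1$. Combined with $\psi_n(y)\to\psi(y)$ from Lemma~\ref{AALem2}, the integrand converges to $\psi(y)/\bigl((1-y)\Lambda(1-y)\bigr)$. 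To pass the limit under the integral on $[0,s]$ with $s<1$, I use dominated convergence: since $\psi_n\le 1$ and the middle ratio is uniformly bounded in $n$ (this uses the monotonicity $R_n(y)\le R_n(0)$ together with the uniform asymptotic $\Lambda(R_n(\cdot))\sim 1/(\nu n)$ provided by Lemma~\ref{AALem3}), the integrand is dominated by a multiple of $1/\bigl((1-y)\Lambda(1-y)\bigr)$, which is integrable on $[0,s]$. This yields \eqref{1.8}, and the fundamental theorem of calculus gives \eqref{1.9}.

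Finally, for the asymptotic $\psi(s)=1+\mathcal{O}\bigl(\Lambda(1-s)\bigr)$: from the $n\to\infty$ limit of Lemma~\ref{AALem2} one has $f'(s)\le\psi(s)\le 1$. Differentiating {\hyperref[RAfL]{$\left[{f_\Lambda}\right]$}} gives
\[
f'(s)=1-\Lambda(1-s)-(1-s)\Lambda'(1-s).
\]
Writing $\Lambda(y)=y^\nu\mathcal{L}(1/y)$ and using the standard property $t\mathcal{L}'(t)/\mathcal{L}(t)\to 0$ for slowly varying $\mathcal{L}$ (available here under the remainder assumption), one obtains $(1-s)\Lambda'(1-s)\sim\nu\,\Lambda(1-s)$ as $s\uparrow 1$; hence $f'(s)=1-(1+\nu)\Lambda(1-s)+o\bigl(\Lambda(1-s)\bigr)$, and the squeeze yields the claimed estimate for $\psi$.

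The main obstacle I anticipate is justifying the uniform-in-$n$ domination: one needs to check that the ratio $R_n(y)\Lambda(R_n(y))/\bigl(R_n(0)\Lambda(R_n(0))\bigr)$ does not blow up as $n$ varies and as $y$ approaches $s$, which requires careful use of the explicit form in Lemma~\ref{AALem1} together with the monotonicity of $f_n$. The analytic step handling $f'(s)$ is more routine, but still requires invoking Karamata-type differentiation properties for slowly varying functions.
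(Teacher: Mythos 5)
Your argument is sound, but it is worth noting that this paper does not actually prove Theorem~\ref{AATh1}: the theorem is recalled verbatim from \cite{ImomovSM23}, and the only related proof given here is that of the sharper Theorem~\ref{AATh2}. That proof takes a genuinely different route from yours. Instead of writing $U_n(s)$ as an integral of $-R'_n(y)$ and passing to the limit under the integral sign, the paper works with ${\mathcal{M}}_n(s)=1-\Lambda\bigl(R_n(s)\bigr)/\Lambda\bigl(Q_n\bigr)$, uses the known limits $n{\mathcal{M}}_n(s)\to U(s)$ and \eqref{3.7} to obtain the closed form $U(s)=\mathcal{V}(s)-\mathcal{V}(0)$ with no integral at all, and only then differentiates, which identifies the ``undesirable'' function explicitly as $\psi(s)=J(s)/\nu$ via \eqref{3.3}. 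Your route --- substituting Lemma~\ref{AALem2} into $f_n(s)-f_n(0)=-\int_0^s R'_n(y)\,dy$, normalizing the denominator by $f_{n+1}(0)-f_n(0)=Q_n\Lambda(Q_n)$, and justifying dominated convergence through the uniform bounds of Lemma~\ref{AALem3} --- is a legitimate reconstruction of how the integral representation \eqref{1.8} arises, and it correctly isolates the one delicate point (uniform-in-$n$ domination of $R_n(y)\Lambda(R_n(y))/\bigl(Q_n\Lambda(Q_n)\bigr)$, which does follow from $R_n(y)\le Q_n$ plus the uniform boundedness of $\sigma_n$ in Lemma~\ref{AALem3}). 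What your approach buys is a proof that stays at the level of the differential Basic Lemma and yields $\psi$ as $\lim_n\psi_n$ together with the sandwich $f'(s)\le\psi(s)\le 1$; what the paper's approach buys is an explicit formula for $\psi$ and for $U$ itself, making the integral representation a corollary rather than the endpoint. One small imprecision: the property you invoke as ``$t\mathcal{L}'(t)/\mathcal{L}(t)\to 0$ for slowly varying $\mathcal{L}$'' is not automatic for arbitrary SV functions; what is actually used (and what the paper cites from \cite[p.~401]{Bingham}) is the monotone-density--type statement $y\Lambda'(y)/\Lambda(y)\to\nu$ under {\hyperref[RAfL]{$\left[{f_\Lambda}\right]$}}, which is available here because $\Lambda$ is built from the differentiable $f$. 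With that substitution your squeeze argument for $\psi(s)=1+\mathcal{O}\bigl(\Lambda(1-s)\bigr)$ goes through exactly as in the paper's Lemma~\ref{AALem4}.
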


    In the last statements inequality estimations for the functions $\psi_n(s)$
    and $\psi(s)$ were announced, but explicit expressions were not obtained for them.

    In this paper, in addition to the assumption {\hyperref[RAfL]{$\left[{f_\Lambda}\right]$}},
    we adopt the remainder term rate of the SV-function $\mathcal{L}(\ast)$ to be
\[            \label{RAO(L/x)}
    \alpha_{\lambda}(x) = \mathcal{O}\left( {{{\mathcal{L}\left(x\right)} \over {x^\nu}}} \right)
    \qquad \text{as}\quad x \to \infty,     \eqno[\textsf {$\mathcal{R}_{\mathcal{L}}$}]
\]
    that is more exact decreasing speed rate condition, than it was
    assumed in contents of the Lemma~\ref{AALem3} and in the Theorem~\ref{AATh1}.

    Our purpose is as follows. First, we improve the result of Theorem~\ref{AATh1} by finding an explicit
    expression for the function $U(s)$ that is more exactly than in \eqref{1.8} and an explicit expression
    for the ``undesirable'' function $\psi(s)$ in the equality \eqref{1.9} depending on GF $f(s)$ and $f'(s)$.
    This contributes to the refinement of the formula \eqref{1.6}, pointing to the explicit form of the function
    $\psi_n(s)$. In this issue we propose another proof of the Lemma~\ref{AALem2} that improves its content.
    Next, using condition {\hyperref[RAO(L/x)]{$\left[\mathcal{R}_{\mathcal{L}}\right]$}}, we find the main
    part term in the asymptotic expansion of the right-hand side of \eqref{1.6} with an estimate for the
    remainder term. All these results facilitate to refine some limit theorems.

    The rest of this paper is organized as follows. Section~\ref{MySec2} contains main results.
    Section~\ref{MySec3} provides the proof of main results.

\section{Main results}   \label{MySec2}

    In this section we present our main results. Let
\[
    \mathcal{V}(s):={\frac{1}{\nu\Lambda(1-s)}}  \qquad \text{and} \qquad {J(s)}:={\frac{1-f'(s)}{\Lambda(1-s)}}-1.
\]

\begin{theorem}               \label{AATh2}
    If condition {\hyperref[RAfL]{$\left[{f_\Lambda}\right]$}} holds, then
\begin{itemize}
\item [\textbf{(1)}] the GF $U(s)$ has the following form:
\begin{equation}                 \label{2.1}
    U(s) = \mathcal{V}(s) - \mathcal{V}(0);
\end{equation}
\item [\textbf{(2)}] the derivative $U'(s)$ has the following  expression:
\begin{equation}                 \label{2.2}
    U'(s) = {J(s)}{\frac{\mathcal{V}{(s)}}{1-s}} \raise1.2pt\hbox{.}
\end{equation}
\end{itemize}
\end{theorem}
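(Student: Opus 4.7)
The plan is to derive Part (2) first by identifying the function $\psi$ of Theorem~\ref{AATh1} explicitly, and then to obtain Part (1) by a one-line integration. The link between the two parts is a purely formal identity: differentiating the hypothesis $[f_\Lambda]$, i.e.\ $f(s)=s+(1-s)\Lambda(1-s)$, gives
\[
1-f'(s)=\Lambda(1-s)+(1-s)\Lambda'(1-s),
\]
so that $J(s)=(1-s)\Lambda'(1-s)/\Lambda(1-s)$. Differentiating $\mathcal{V}(s)=1/[\nu\Lambda(1-s)]$ directly yields $\mathcal{V}'(s)=\Lambda'(1-s)/[\nu\Lambda^2(1-s)]$, and the identity $J(s)\mathcal{V}(s)/(1-s)=\mathcal{V}'(s)$ is then immediate. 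Hence Parts~(1) and~(2) are equivalent up to the integration constant $-\mathcal{V}(0)$ forced by $U(0)=0$.

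The substantive step is therefore to match Theorem~\ref{AATh1}'s representation $U'(s)=\psi(s)/[(1-s)\Lambda(1-s)]$ with the form claimed in \eqref{2.2}. This forces the explicit identification $\psi(s)=J(s)/\nu$. To prove it I would recast Lemma~\ref{AALem2} as a concrete formula for $\psi_n(s)$: since $R_n'(s)=-f_n'(s)$ and $R_n(s)\Lambda(R_n(s))=f_{n+1}(s)-f_n(s)$, the relation \eqref{1.6} rewrites as
\[
\psi_n(s)=\frac{f_n'(s)\,(f(s)-s)}{f_{n+1}(s)-f_n(s)}\,.
\]
I would then evaluate $\lim_{n\to\infty}\psi_n(s)$ by substituting $R_n(s)\Lambda(R_n(s))=R_n(s)^{1+\nu}\mathcal{L}(1/R_n(s))$ in the denominator, using $f_n'(s)=\prod_{k=0}^{n-1}f'(f_k(s))$ in the numerator, and applying the Basic Lemma~\ref{AALem1} together with the normalization \eqref{1.5} (and, if required, the refined expansion of Lemma~\ref{AALem3}) to cancel the $\mathcal{N}(n)$- and $\mathcal{L}$-type factors. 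The ratio should converge to $(1-s)\Lambda'(1-s)/[\nu\Lambda(1-s)]$, as required.

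With $\psi(s)=J(s)/\nu$ in hand, Part~(2) is immediate by substitution into \eqref{1.9}, and Part~(1) follows by integrating $U'(s)=\mathcal{V}'(s)$ from $0$ to $s$ with $U(0)=0$, giving $U(s)=\mathcal{V}(s)-\mathcal{V}(0)$. The main obstacle is precisely the limit evaluation of $\psi_n(s)$: the slowly varying corrections in the numerator and denominator must be controlled to precision $o(1)$ uniformly in $s\in[0,1)$, and the two-sided bound $f'(s)/f'(f_n(s))\leq\psi_n(s)\leq 1$ furnished by Lemma~\ref{AALem2} is far too loose for this. The paper's promised refinement of Lemma~\ref{AALem2}, yielding an explicit form of $\psi_n(s)$ in terms of $f$ and $f'$, is presumably the tool that makes the cancellation transparent and pins the limit down to $J(s)/\nu$ exactly rather than only asymptotically.
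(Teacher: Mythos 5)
Your formal bookkeeping is correct and partly coincides with the paper: the identity $1-f'(s)=\Lambda(1-s)+(1-s)\Lambda'(1-s)$, hence $J(s)=(1-s)\Lambda'(1-s)/\Lambda(1-s)$ and $\mathcal{V}'(s)=J(s)\mathcal{V}(s)/(1-s)$, is exactly how the paper passes between Parts (1) and (2) (it goes from (1) to (2) by differentiation; you propose the reverse direction by integration, which is equally fine once either part is established). The rewriting of \eqref{1.6} as $\psi_n(s)=f_n'(s)\,(f(s)-s)\big/\bigl(f_{n+1}(s)-f_n(s)\bigr)$ is also correct. But the substantive step of your plan --- proving $\lim_{n\to\infty}\psi_n(s)=J(s)/\nu$ --- is not carried out; you only describe the cancellations you would need and then concede that the available bounds are ``far too loose.'' That concession is accurate, and it is where the proposal fails. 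Worse, the tool you hope will rescue the argument (the explicit form of $\psi_n(s)$) is, in the paper, a \emph{consequence} of Theorem~\ref{AATh2}: formula \eqref{3.8}, which identifies $\psi_n(s)=J(s)/J\bigl(R_n(s)\bigr)$, is derived by iterating the Abel equation to $U\bigl(f_n(s)\bigr)=U(s)+n$ and substituting the already-proved expression \eqref{2.2} into $f_n'(s)=U'(s)/U'\bigl(f_n(s)\bigr)$. Leaning on that refinement to prove Theorem~\ref{AATh2} would therefore be circular.

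The paper avoids this entirely by proving Part (1) first and not through $\psi_n$ at all: it sets $\mathcal{M}_n(s)=1-\Lambda\bigl(R_n(s)\bigr)/\Lambda\bigl(Q_n\bigr)$ and imports from \cite{ImomovSM23} the two facts that $n\mathcal{M}_n(s)\to U(s)$ and that $\tfrac{1}{\nu n}\bigl[\Lambda^{-1}\bigl(R_n(s)\bigr)-\Lambda^{-1}(1-s)\bigr]\to 1$ (relation \eqref{3.7}); a short computation of the resulting limit, using $\Lambda(1)=p_0$, gives $U(s)=\mathcal{V}(s)-\mathcal{V}(0)$, and Part (2) is then the one-line differentiation you already have. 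To salvage your route you would need an independent evaluation of $\lim_n\psi_n(s)$ --- effectively sharp, $s$-uniform control of the $O(1/n)$ correction terms in both $f_n'(s)$ and $R_n(s)\Lambda\bigl(R_n(s)\bigr)$ --- which amounts to reproving the imported asymptotics rather than citing them.
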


\begin{remark}
    Undoubtedly, the function $U(s)$, as the limit of the generating function, admits the form of a power series expansion
\[
    U(s)= \sum_{j \in\mathcal{S}}{u_j s^j},
\]
    where $u_j=\sum_{k\in{\mathcal{S}}}{u_kP_{kj}(1)}$ and $\sum_{k\in{\mathcal{S}}}{u_k p_{0}^{k}}=1$; see {\cite[Lemma~4]{Slack68}}.
    Then relation~\eqref{2.2} immediately implies that
\begin{equation}              \label{2.3}
    {u_1}={U'(0)}={\frac{J(0)}{\nu{p_0}}}={\frac{1-p_0-p_1}{\nu{p_0^2}}} \raise1.2pt\hbox{.}
\end{equation}
\end{remark}

    Next, differentiating the Slack’s altered definition {\hyperref[SAU]{$\left[{\mathcal{S}_{U}}\right]$}} we have
\[
    {U'_n(s)}=-{\frac{R'_n(s)}{Q_n\Lambda \bigl({Q_n}\bigr)}}\raise1.2pt\hbox{.}
\]
    Thus, we can interpret the statement of the Lemma~{\ref{AALem2}} in terms of the
    convergence ${U'_n(s)}\to{U'(s)}$ as $n\to\infty$. So we provide its refinement in the following theorem.

\begin{theorem}               \label{AATh3}
    If conditions {\hyperref[RAfL]{$\left[{f_\Lambda}\right]$}} and
    {\hyperref[RAO(L/x)]{$\left[\mathcal{R}_{\mathcal{L}}\right]$}} hold, then
\begin{equation}              \label{2.4}
    {U'_n(s)}={U'(s)}\left(1+\mathcal{O}\left(\frac{\,1\,}{n}\right)\right)  \qquad \text{as}\quad n \to \infty,
\end{equation}
    where $U'(s)$ has the form of \eqref{2.2}.
\end{theorem}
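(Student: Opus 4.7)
The plan is to turn Theorem~\ref{AATh3} into a ratio identity. Differentiating $[\mathcal{S}_U]$ and using the elementary observation $f_{n+1}(0)-f_n(0)=Q_n\Lambda(Q_n)$ (which follows directly from $[f_\Lambda]$) gives $U'_n(s)=-R'_n(s)\big/\bigl(Q_n\Lambda(Q_n)\bigr)$. Substituting for $R'_n(s)$ from Lemma~\ref{AALem2} and comparing with the explicit form $U'(s)=J(s)\big/\bigl(\nu(1-s)\Lambda(1-s)\bigr)$ of Theorem~\ref{AATh2}(2) produces
\[
\frac{U'_n(s)}{U'(s)}=\frac{\nu\,\psi_n(s)}{J(s)}\cdot\frac{R_n(s)\Lambda\bigl(R_n(s)\bigr)}{Q_n\Lambda(Q_n)}.
\]
The task is then to show that each of the two factors on the right equals $1+\mathcal{O}(1/n)$ pointwise in $s\in[0,1)$.

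The second factor is handled through Lemma~\ref{AALem1}. Relation~\eqref{1.4} yields
\[
\frac{R_n(s)}{Q_n}=1-\frac{\mathcal{U}_n(s)}{\nu n}=1+\mathcal{O}\!\left(\frac{1}{n}\right)
\]
since $\mathcal{U}_n(s)\to U(s)$. Writing $\Lambda(y)=y^{\nu}\mathcal{L}(1/y)$ and factoring,
\[
\frac{R_n(s)\Lambda(R_n(s))}{Q_n\Lambda(Q_n)}=\left(\frac{R_n(s)}{Q_n}\right)^{\nu+1}\cdot\frac{\mathcal{L}\bigl(1/R_n(s)\bigr)}{\mathcal{L}(1/Q_n)}.
\]
The first of these factors inherits $1+\mathcal{O}(1/n)$. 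The second is of the form $1+\alpha_\lambda(1/Q_n)$ with $\lambda=Q_n/R_n(s)\to 1$, and under $[\mathcal{R}_\mathcal{L}]$ is $\mathcal{O}\bigl(Q_n^{\nu}\mathcal{L}(1/Q_n)\bigr)=\mathcal{O}\bigl(\Lambda(Q_n)\bigr)=\mathcal{O}(1/n)$, where the final step combines the survival asymptotics $Q_n\sim\mathcal{N}(n)/(\nu n)^{1/\nu}$ with~\eqref{1.5}.

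The first factor is the more delicate one. An explicit expression for $\psi_n(s)$ is needed, which the paper obtains as a byproduct of its refined proof of Lemma~\ref{AALem2}. The natural derivation exploits the chain-rule factorization $R'_n(s)=-\prod_{k=0}^{n-1}f'\bigl(f_k(s)\bigr)$ together with the identity $f'(t)=1-\Lambda(1-t)\bigl[1+J(t)\bigr]$ obtained by differentiating $[f_\Lambda]$. Taking logarithms and summing, $\ln\psi_n(s)$ turns into a Riemann-type sum whose limit is the integral representing $U(s)$ in~\eqref{2.1}, and the quantitative rate $\psi_n(s)=J(s)/\nu+\mathcal{O}(1/n)$ is extracted from the sharpened version of~\eqref{1.7} that becomes available under $[\mathcal{R}_\mathcal{L}]$.

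The main obstacle is precisely this last extraction: the summands $\Lambda(R_k(s))$ are of order $1/k$, so any naive estimate gives only a logarithmic remainder, and the passage to $\mathcal{O}(1/n)$ hinges on upgrading the $o$-estimate in Lemma~\ref{AALem3} to an $\mathcal{O}$-estimate using the stronger assumption $[\mathcal{R}_\mathcal{L}]$. Once this refinement of Lemma~\ref{AALem3} is available, multiplying the two $\mathcal{O}(1/n)$ estimates yields~\eqref{2.4}.
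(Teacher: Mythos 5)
Your reduction to the ratio identity
\[
\frac{U'_n(s)}{U'(s)}=\frac{\nu\,\psi_n(s)}{J(s)}\cdot\frac{R_n(s)\Lambda\bigl(R_n(s)\bigr)}{Q_n\Lambda(Q_n)}
\]
is sound, and your treatment of the second factor (via \eqref{1.4}, the factorization $\Lambda(y)=y^{\nu}\mathcal{L}(1/y)$, and the bound $\alpha_{\lambda}(1/Q_n)=\mathcal{O}\bigl(\Lambda(Q_n)\bigr)=\mathcal{O}(1/n)$ under {\hyperref[RAO(L/x)]{$\left[\mathcal{R}_{\mathcal{L}}\right]$}}) is correct and in fact more explicit than what the paper writes down. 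But the first factor is where the theorem actually lives, and there your argument has a genuine gap: you need $\nu\psi_n(s)/J(s)=1+\mathcal{O}(1/n)$, and what you offer for it is a programme, not a proof. Taking logarithms of $R'_n(s)=-\prod_{k=0}^{n-1}f'\bigl(f_k(s)\bigr)$ produces a sum whose individual terms are of size $\Lambda\bigl(R_k(s)\bigr)\asymp 1/k$; as you yourself note, this yields only logarithmic control, and the ``sharpened version of \eqref{1.7}'' and the ``upgrade of Lemma~\ref{AALem3} from $o$ to $\mathcal{O}$'' that you invoke to rescue it are neither stated precisely nor proved. As written, the $\mathcal{O}(1/n)$ rate is assumed rather than established.

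The idea you are missing is that Theorem~\ref{AATh2} has already done this work for you. Iterating the Abel equation \eqref{1.3} gives the exact identity $U\bigl(f_n(s)\bigr)=U(s)+n$, and differentiating it gives $f'_n(s)=U'(s)\big/U'\bigl(f_n(s)\bigr)$ with no error term at all. Substituting the closed form \eqref{2.2} into both numerator and denominator identifies $\psi_n(s)$ exactly as $J(s)\big/J\bigl(R_n(s)\bigr)$, i.e.
\[
R'_n(s)=-\,\frac{J(s)}{J\bigl(R_n(s)\bigr)}\cdot
\frac{R_n(s)\Lambda\bigl(R_n(s)\bigr)}{(1-s)\Lambda(1-s)}\,\raise1.2pt\hbox{,}
\]
after which the rate is immediate from Lemma~\ref{AALem4}(2): $J\bigl(R_n(s)\bigr)=\nu+\mathcal{O}\bigl(R_n^{\nu}(s)\bigr)=\nu+\mathcal{O}(1/n)$, since $R_n^{\nu}(s)\asymp \Lambda\bigl(R_n(s)\bigr)\big/\mathcal{L}\bigl(1/R_n(s)\bigr)=\mathcal{O}(1/n)$. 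No product expansion, no Riemann sum, and no refinement of Lemma~\ref{AALem3} is needed; the telescoping built into the Abel equation is precisely what replaces the delicate summation you were trying to control. If you insert this identification of $\psi_n(s)$ into your ratio identity, your argument closes.
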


    The assertion of Theorem~{\ref{AATh3}} provides the following important limit result. Let
\[
    {\mathcal{N}_{\nu}(n)}:={\mathcal{L}^{-1/\nu}\left(\frac{1}{Q_n}\right)}
    \qquad \text{and} \qquad
    {\mathcal{P}^{\{j\}}_{\nu}(n)}:={(\nu{n})^{(1+\nu)/\nu}}{\textsf{p}_{j}{(n)}}.
\]

\begin{theorem}               \label{AATh4}
    If conditions {\hyperref[RAfL]{$\left[{f_\Lambda}\right]$}} and
    {\hyperref[RAO(L/x)]{$\left[\mathcal{R}_{\mathcal{L}}\right]$}} hold,
    then the sequence
\[
    \left\{\mathcal{P}_{\nu}(n):={\mathcal{P}^{\{1\}}_{\nu}(n)}\right\}
\]
    is SV at infinity such that
\begin{equation}              \label{2.5}
    {\frac{\mathcal{P}_{\nu}(n)}{\mathcal{N}_{\nu}(n)}} = {u_1} \cdot \left(1-{\frac{(1+\nu)^2}{2\nu^2}}
    \frac{\ln{n}}{n} + o\left(\frac{\ln{n}}{n} \right) \right)   \qquad \text{as}\quad n \to \infty,
\end{equation}
    where ${u_1}$ is given in \eqref{2.3}. Moreover
\[
    {\mathcal{N}_{\nu}(n)} \cdot \mathcal{L}^{{1/\nu}}
    \left({\frac{(\nu{n})^{{1/\nu}}}{\mathcal{N}_{\nu}(n)}} \right)
    \longrightarrow{1}   \qquad \text{as}\quad n \to \infty
\]
    and
\[
    {\mathcal{N}_{\nu}(n)}=C_{\mathcal{N}}+\mathcal{O}\left({n^{-\nu}}\right)  \qquad \text{as} \quad n \to \infty,
\]
    where $C_{\mathcal{N}}=C_{\mathcal{L}}^{-1/\nu}$ and $C_{\mathcal{L}}:=\mathcal{L}(\infty-)<\infty$.
\end{theorem}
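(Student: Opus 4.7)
The plan is to express $\mathcal{P}_\nu(n)$ in terms of $U_n'(0)$, $Q_n$, and $\Lambda(Q_n)$ via the differential form of the Basic Lemma (equivalently, by differentiating the altered definition~{\hyperref[SAU]{$\left[\mathcal{S}_U\right]$}}), then feed in the sharp expansion of Lemma~\ref{AALem3} to capture the $\ln n/n$ correction. Differentiating~{\hyperref[SAU]{$\left[\mathcal{S}_U\right]$}} at $s=0$ gives $\textsf{p}_1(n)=f_n'(0)=U_n'(0)\,Q_n\,\Lambda(Q_n)$. Using $\Lambda(Q_n)=Q_n^\nu\,\mathcal{L}(1/Q_n)$ and the Basic Lemma's $Q_n=\mathcal{N}(n)/(\nu n)^{1/\nu}$, the factor $(\nu n)^{(1+\nu)/\nu}$ cancels exactly and leaves $\mathcal{P}_\nu(n)=U_n'(0)\,\mathcal{N}(n)^{1+\nu}\,\mathcal{L}(1/Q_n)$. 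Since $\mathcal{N}_\nu(n)=\mathcal{L}^{-1/\nu}(1/Q_n)$ and $\mathcal{N}(n)\,\mathcal{L}^{1/\nu}(1/Q_n)=(\nu n\,\Lambda(Q_n))^{1/\nu}$ (using $\mathcal{L}(1/Q_n)=\Lambda(Q_n)/Q_n^\nu$), a short algebraic step collapses the ratio into
\[
    \frac{\mathcal{P}_\nu(n)}{\mathcal{N}_\nu(n)} = U_n'(0)\,\bigl[\nu n\,\Lambda(Q_n)\bigr]^{(1+\nu)/\nu}.
\]

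Next I would invoke Lemma~\ref{AALem3} at $s=0$: with $\Lambda(1)=p_0>0$ a constant and $\rho_n(0)=o(\ln n)+O(1)$, one obtains $1/\Lambda(Q_n)=\nu n+\tfrac{1+\nu}{2}\ln n+o(\ln n)$ and therefore $\nu n\,\Lambda(Q_n)=1-\tfrac{1+\nu}{2\nu}\cdot\tfrac{\ln n}{n}+o(\ln n/n)$. Raising to the exponent $(1+\nu)/\nu$ gives $1-\tfrac{(1+\nu)^2}{2\nu^2}\cdot\tfrac{\ln n}{n}+o(\ln n/n)$, and multiplying by the rate $U_n'(0)=u_1(1+O(1/n))$ from Theorem~\ref{AATh3} yields exactly \eqref{2.5}.

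The remaining claims fall out quickly. Slow variation of $\{\mathcal{P}_\nu(n)\}$ follows because \eqref{2.5} combined with $\mathcal{N}_\nu(n)\to C_\mathcal{N}$ forces $\mathcal{P}_\nu(n)\to u_1 C_\mathcal{N}\in(0,\infty)$. The relation $\mathcal{N}_\nu(n)\,\mathcal{L}^{1/\nu}\bigl((\nu n)^{1/\nu}/\mathcal{N}_\nu(n)\bigr)\to 1$ follows from the Basic Lemma's \eqref{1.5}, the equivalence $\mathcal{N}_\nu(n)\sim\mathcal{N}(n)$, and the slow variation of $\mathcal{L}$. Finally, condition~{\hyperref[RAO(L/x)]{$[\mathcal{R}_\mathcal{L}]$}} (through the Karamata representation with remainder) gives $\mathcal{L}(\infty-)=C_\mathcal{L}$ together with $\mathcal{L}(x)-C_\mathcal{L}=O(x^{-\nu})$; substituting $x=1/Q_n$ and using $Q_n^\nu=O(1/n)$ yields $\mathcal{N}_\nu(n)-C_\mathcal{N}=O(1/n)$, which is a fortiori the stated $O(n^{-\nu})$ bound since $\nu<1$.

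The main obstacle is threading the $o(\ln n)$ error from Lemma~\ref{AALem3} through inversion and exponentiation cleanly, so that neither the coefficient $(1+\nu)/(2\nu)$ of the leading correction to $\nu n\,\Lambda(Q_n)$ nor its promotion under the exponent $(1+\nu)/\nu$ into $(1+\nu)^2/(2\nu^2)$ is corrupted. Once this bookkeeping is settled, the reduction to Lemmas~\ref{AALem1}--\ref{AALem3} and Theorem~\ref{AATh3}, together with standard SV-with-remainder facts, is routine.
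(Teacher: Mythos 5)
Your proposal is correct and follows essentially the same route as the paper: express $\textsf{p}_1(n)$ through $U'_n(0)\,Q_n\Lambda\bigl(Q_n\bigr)$, control the first factor by Theorem~\ref{AATh3} and $\Lambda\bigl(Q_n\bigr)$ by Lemma~\ref{AALem3}, and read off the $\mathcal{N}_{\nu}$ claims from \eqref{1.5} and \eqref{3.5}. The only difference is organizational but welcome: your exact identity $\mathcal{P}_{\nu}(n)/\mathcal{N}_{\nu}(n)=U'_n(0)\bigl[\nu n\Lambda(Q_n)\bigr]^{(1+\nu)/\nu}$ lets you invoke Lemma~\ref{AALem3} only once, whereas the paper expands $\Lambda\bigl(Q_n\bigr)$ and $Q_n$ separately in \eqref{3.11}--\eqref{3.12} before combining; your version makes the emergence of the coefficient $(1+\nu)^2/(2\nu^2)$ slightly more transparent, and your final bound $\mathcal{N}_{\nu}(n)-C_{\mathcal{N}}=\mathcal{O}(1/n)$ is in fact sharper than the stated $\mathcal{O}(n^{-\nu})$.
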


    The statement of this theorem can be generalized for all ${j\in\mathcal{S}}$ as follows.

\begin{proposition}               \label{AAProp}
    If conditions {\hyperref[RAfL]{$\left[{f_\Lambda}\right]$}} and
    {\hyperref[RAO(L/x)]{$\left[\mathcal{R}_{\mathcal{L}}\right]$}} hold, then
\[
    {\frac{{\mathcal{P}^{\{j\}}_{\nu}(n)}}{\mathcal{N}_{\nu}(n)}}
    = {u_j} \cdot \bigl(1 + {\mho_n}\bigr),
\]
    where
\[
    \mho_{n} = -{\frac{(1+\nu)^2}{2\nu^2}} \frac{\ln{n}}{n} + o\left(\frac{\ln{n}}{n} \right)
    \qquad \text{as}\quad n \to \infty.
\]
\end{proposition}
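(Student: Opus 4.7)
\medskip

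\noindent\textbf{Plan.} The proposal is to reduce the statement to an identity that separates an $n$-only factor (which produces all of $\mho_n$) from a coefficient-extraction factor (which tends to $u_j$ fast enough not to disturb the $\ln n/n$ expansion). From Slack's altered definition $\hyperref[SAU]{[\mathcal{S}_U]}$ and the iterate identity $f_{n+1}(0)-f_n(0)=f(f_n(0))-f_n(0)=(1-f_n(0))\Lambda(1-f_n(0))=Q_n\Lambda(Q_n)$, differentiation $j$ times at $s=0$ yields the master identity
\[
\textsf{p}_j(n)\;=\;\frac{U_n^{(j)}(0)}{j!}\,Q_n\Lambda(Q_n).
\]
Invoking the defining relation of $\mathcal{N}_\nu(n)$, namely $\mathcal{L}(1/Q_n)=\mathcal{N}_\nu(n)^{-\nu}$, we have $Q_n\Lambda(Q_n)=Q_n^{1+\nu}/\mathcal{N}_\nu(n)^\nu$, and multiplying through by $(\nu n)^{(1+\nu)/\nu}/\mathcal{N}_\nu(n)$ rearranges the master identity into
\[
\frac{\mathcal{P}^{\{j\}}_\nu(n)}{\mathcal{N}_\nu(n)}\;=\;\frac{U_n^{(j)}(0)}{j!}\cdot\left(\frac{(\nu n)^{1/\nu}\,Q_n}{\mathcal{N}_\nu(n)}\right)^{1+\nu}.
\]

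\medskip

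The second factor is exactly the quantity whose expansion drives the proof of Theorem~\ref{AATh4} in the $j=1$ case. Specifically, evaluating Lemma~\ref{AALem3} at $s=0$ gives $1/\Lambda(Q_n)=\nu n+\tfrac{1+\nu}{2}\ln n+o(\ln n)$, while $Q_n^\nu=\Lambda(Q_n)\,\mathcal{N}_\nu(n)^\nu$ by construction of $\mathcal{N}_\nu$. Solving for the ratio and expanding yields
\[
\frac{Q_n(\nu n)^{1/\nu}}{\mathcal{N}_\nu(n)}\;=\;\bigl(\nu n\,\Lambda(Q_n)\bigr)^{1/\nu}\;=\;1-\frac{1+\nu}{2\nu^2}\frac{\ln n}{n}+o\!\left(\frac{\ln n}{n}\right),
\]
so that raising to the $(1+\nu)$-th power produces precisely $1+\mho_n$ with $\mho_n$ as in the statement. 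This part of the argument is $j$-independent and delivers the whole leading term of $\mho_n$.

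\medskip

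What remains is to show $U_n^{(j)}(0)/j!=u_j\bigl(1+o(\ln n/n)\bigr)$ for each fixed $j\in\mathcal{S}$. From Theorem~\ref{AATh3} the relative bound $U_n'(s)=U'(s)(1+\mathcal{O}(1/n))$ holds on $[0,1)$; integrating from $0$ to $s$ and using $U_n(0)=U(0)=0$ gives $U_n(s)=U(s)+\mathcal{O}(U(s)/n)$ uniformly on every compact $[0,r]\subset[0,1)$. Both $U_n$ and $U$ are analytic in $\{|z|<1\}$ with nonnegative Maclaurin coefficients, so by positivity $|U_n(z)-U(z)|\le U_n(r)+U(r)$ on $|z|=r$ and in fact the real-line $\mathcal{O}(1/n)$ rate transfers to the compact disc $\{|z|\le r\}$ by a standard normal-family / maximum-modulus argument, after which Cauchy's integral formula
\[
\frac{U_n^{(j)}(0)}{j!}-u_j\;=\;\frac{1}{2\pi i}\oint_{|z|=r}\frac{U_n(z)-U(z)}{z^{j+1}}\,dz
\]
yields a coefficient-wise bound of order $\mathcal{O}(1/n)$, which is $o(\ln n/n)$ and so is absorbed into the remainder. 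Combining this with the bracketed-factor expansion concludes the proof. The main obstacle is precisely this last bootstrap: promoting the real-variable convergence rate from Theorem~\ref{AATh3} to a complex-disc estimate that is sharp enough to feed Cauchy's formula while remaining uniform in $j$ over the range of interest.
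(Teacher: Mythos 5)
The paper itself does not prove this Proposition --- it explicitly defers the proof to future work --- so your attempt can only be judged on its own terms. Your reduction is sound and is the natural extension of the paper's proof of Theorem~\ref{AATh4}: the master identity $\textsf{p}_j(n)=\frac{U_n^{(j)}(0)}{j!}\,Q_n\Lambda(Q_n)$ follows correctly from {\hyperref[SAU]{$\left[{\mathcal{S}_{U}}\right]$}} and {\hyperref[RAfL]{$\left[{f_\Lambda}\right]$}}, the algebra giving $\mathcal{P}^{\{j\}}_{\nu}(n)/\mathcal{N}_{\nu}(n)=\frac{U_n^{(j)}(0)}{j!}\bigl(\nu n\,\Lambda(Q_n)\bigr)^{(1+\nu)/\nu}$ is right, and the expansion of $\nu n\,\Lambda(Q_n)$ via Lemma~\ref{AALem3} at $s=0$ does produce exactly $1+\mho_n$, consistent with \eqref{3.11}--\eqref{3.13}. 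For $j=1$ your argument closes, because $U_n'(0)=U'(0)(1+\mathcal{O}(1/n))$ is literally Theorem~\ref{AATh3} at $s=0$; this recovers Theorem~\ref{AATh4}.

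The genuine gap is the step $U_n^{(j)}(0)/j!=u_j\bigl(1+o(\ln n/n)\bigr)$ for $j\ge 2$, and the ``normal-family / maximum-modulus'' bootstrap you propose does not deliver it. A bound of the form $|U_n(s)-U(s)|\le \frac{C}{n}U(s)$ on a real segment $[0,r]$ does not control Maclaurin coefficients: functions can be uniformly small on a real interval while having large coefficients (Chebyshev-type examples), so Cauchy's formula cannot be fed from real-line data alone. The Vitali/normal-family route gives coefficient convergence $U_n^{(j)}(0)/j!\to u_j$ (which is essentially Slack's result) but with \emph{no rate}, and a rate strictly better than $\ln n/n$ is indispensable here, since otherwise the coefficient error swamps the very term $\mho_n$ you are trying to isolate. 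Your displayed bound $|U_n(z)-U(z)|\le U_n(r)+U(r)$ on $|z|=r$ does not even tend to zero. To close the argument you would need either a complex-disc analogue of Theorem~\ref{AATh3} (an estimate of $R_n'(z)$ or of $1/\Lambda(R_n(z))-1/\Lambda(1-z)$ valid on $|z|\le r$), or a direct higher-derivative analogue of Lemma~\ref{AALem2}, e.g.\ by differentiating the exact identity $1/\Lambda(R_n(s))-1/\Lambda(1-s)=\nu n$ repeatedly and controlling the resulting terms. You correctly flag this as the main obstacle; it is precisely the missing ingredient, and presumably the reason the author postpones the proof.
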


    We leave the proof of {\hyperref[AAProp]{Propisition}} until our next works.

\section{Proof of results}   \label{MySec3}

   We will need the following auxiliary statement.

\begin{lemma}                 \label{AALem4}
    Let condition {\hyperref[RAfL]{$\left[{f_\Lambda}\right]$}} holds.
\begin{itemize}
\item [\textbf{1.}] Then
\begin{equation}              \label{3.1}
    \rho{(s)}:=\Big\vert \nu-{J(s)} \Big\vert \longrightarrow{0}  \qquad \text{as}\quad s \uparrow{1}.
\end{equation}
\item [\textbf{2.}] If, in addition {\hyperref[RAO(L/x)]{$\left[\mathcal{R}_{\mathcal{L}}\right]$}} holds, then
\begin{equation}              \label{3.2}
    \rho{(s)}=\mathcal{O}\left(\bigl(1-s\bigr)^{\nu}\right) \qquad \text{as}\quad s \uparrow{1}.
\end{equation}
\end{itemize}
\end{lemma}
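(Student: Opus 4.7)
The plan is to derive a closed form for $J(s)$ by differentiating the functional equation {\hyperref[RAfL]{$\left[{f_\Lambda}\right]$}}, reduce both statements to estimates on the logarithmic derivative of $\mathcal{L}$, and then apply slowly-varying function machinery---qualitatively for Part~\textbf{1} and quantitatively under {\hyperref[RAO(L/x)]{$\left[\mathcal{R}_{\mathcal{L}}\right]$}} for Part~\textbf{2}.

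First I would differentiate $f(s)-s=(1-s)\Lambda(1-s)$ with respect to $s$, obtaining $1-f'(s)=\Lambda(1-s)+(1-s)\Lambda'(1-s)$; dividing by $\Lambda(1-s)$ then yields the closed form
\[
J(s)=\frac{(1-s)\Lambda'(1-s)}{\Lambda(1-s)}.
\]
Substituting $\Lambda(y)=y^{\nu}\mathcal{L}(1/y)$ and computing the logarithmic derivative,
\[
\frac{y\Lambda'(y)}{\Lambda(y)}=\nu-\frac{x\mathcal{L}'(x)}{\mathcal{L}(x)}, \qquad x=\frac{1}{y},
\]
so with $y=1-s$ I get $\rho(s)=|x\mathcal{L}'(x)/\mathcal{L}(x)|$ for $x=1/(1-s)$. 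Both parts of the lemma thus reduce to controlling the logarithmic derivative of $\mathcal{L}$.

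For Part~\textbf{1}, I would observe that $y\Lambda(y)=\int_{0}^{y}[1-f'(1-u)]\,du$ is the integral of the increasing function $u\mapsto 1-f'(1-u)$, so $\Lambda$ is an increasing average and is monotone on $(0,1]$. The monotone density theorem applied to $\Lambda$ (regularly varying of index $\nu$) then gives $y\Lambda'(y)\sim\nu\Lambda(y)$ as $y\downarrow 0$, i.e.\ $x\mathcal{L}'(x)/\mathcal{L}(x)\to 0$, whence $\rho(s)\to 0$. For Part~\textbf{2}, I would iterate {\hyperref[RAO(L/x)]{$\left[\mathcal{R}_{\mathcal{L}}\right]$}} along a geometric sequence to deduce that $\mathcal{L}$ is bounded, converges to a finite positive limit $C_{\mathcal{L}}$, and obeys $\mathcal{L}(x)-C_{\mathcal{L}}=\mathcal{O}(x^{-\nu})$. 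Writing $\mathcal{L}=C_{\mathcal{L}}+E$ with $E(x)=\mathcal{O}(x^{-\nu})$, the logarithmic derivative reduces to $xE'(x)/(C_{\mathcal{L}}+E(x))$, and I would apply the mean value theorem to $\mathcal{L}$ on $[x,\lambda x]$ together with the monotonicity of $\Lambda'$ inherited from the PGF structure (namely $f''\ge 0$) to transfer the $\mathcal{O}(x^{-\nu})$ rate from $E$ to $E'$, concluding $\rho(s)=\mathcal{O}((1-s)^{\nu})$.

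The main obstacle is this final transfer step in Part~\textbf{2}: passing from the increment-type condition {\hyperref[RAO(L/x)]{$\left[\mathcal{R}_{\mathcal{L}}\right]$}} to a pointwise rate on $\mathcal{L}'$. Without extra regularity an $\mathcal{O}$-bound on $\mathcal{L}(\lambda x)-\mathcal{L}(x)$ only controls an average of $\mathcal{L}'$ over an interval rather than $\mathcal{L}'$ pointwise; here the PGF-origin of $f$ (hence of $\Lambda$ and $\mathcal{L}$) supplies exactly the monotonicity and smoothness needed to bridge the gap, which is the very feature that also legitimizes the monotone density argument in Part~\textbf{1}.
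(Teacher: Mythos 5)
Your Part~\textbf{1} is correct and is essentially the paper's route: the paper also differentiates $\left[{f_\Lambda}\right]$ to get $1-f'(s)=\Lambda(1-s)+(1-s)\Lambda'(1-s)$, identifies $J(s)=(1-s)\Lambda'(1-s)/\Lambda(1-s)$, and then simply cites Bingham--Goldie--Teugels (p.~401) for $y\Lambda'(y)/\Lambda(y)\to\nu$; your monotone-density argument via $y\Lambda(y)=\int_0^y\bigl[1-f'(1-u)\bigr]\,du$ with monotone integrand is precisely the proof of that cited fact, so you have only supplied what the paper outsources. Your algebraic reduction $\rho(s)=\bigl|x\mathcal{L}'(x)/\mathcal{L}(x)\bigr|$, $x=1/(1-s)$, and the derivation of $\mathcal{L}(x)=C_{\mathcal{L}}+\mathcal{O}(x^{-\nu})$ from $\left[\mathcal{R}_{\mathcal{L}}\right]$ by iteration along a geometric sequence also match the paper's equivalence \eqref{3.5}.

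The gap is in the final step of Part~\textbf{2}, and it is exactly the step you flag as the main obstacle: you do not actually close it. You need the pointwise bound $\mathcal{L}'(x)=\mathcal{O}(x^{-1-\nu})$, but $\left[\mathcal{R}_{\mathcal{L}}\right]$ only bounds increments $\mathcal{L}(\lambda x)-\mathcal{L}(x)$, and the two tools you invoke do not bridge this. First, the ``monotonicity of $\Lambda'$'' you claim to inherit from $f''\ge 0$ is not available: $f''\ge 0$ gives convexity of $y\mapsto f(1-y)-(1-y)=y\Lambda(y)$, i.e.\ monotonicity of $\Lambda(y)+y\Lambda'(y)=1-f'(1-y)$, which says nothing about monotonicity of $\Lambda'$ or $\mathcal{L}'$ separately; the mean value theorem on $[x,\lambda x]$ then only controls $\mathcal{L}'$ at an unspecified intermediate point. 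Second, the convexity that \emph{is} available gives the sandwich $\bigl(G(y)-G(y-h)\bigr)/h\le G'(y)\le\bigl(G(y+h)-G(y)\bigr)/h$ for $G(y)=y\Lambda(y)=C_{\mathcal{L}}y^{1+\nu}+\mathcal{O}(y^{1+2\nu})$, and optimizing the window $h$ balances the two error terms $\mathcal{O}(y^{\nu-1}h)$ and $\mathcal{O}(y^{1+2\nu}/h)$ at $h\asymp y^{1+\nu/2}$, yielding only $\rho(s)=\mathcal{O}\bigl((1-s)^{\nu/2}\bigr)$ rather than the asserted $\mathcal{O}\bigl((1-s)^{\nu}\bigr)$. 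To be fair, the paper itself does not prove this step either: it writes $y\Lambda'(y)/(\nu\Lambda(y))=1+\delta(y)$ and imports $\delta(y)=\mathcal{O}\bigl(\Lambda(y)\bigr)$ from the arguments of \cite[p.~126]{ImomovSM23}. But an honest self-contained proof must supply that transfer, and your proposal as written does not.
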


\begin{proof}
    From representation {\hyperref[RAfL]{$\left[{f_\Lambda}\right]$}} we have
\begin{equation}              \label{3.3}
    1 - f'(s) = \Lambda (1-s) + (1-s)\Lambda{'}(1-s).
\end{equation}
    On the other hand, it was proved in the book {\cite[p.~401]{Bingham}} that
\[
    {\frac{y\Lambda'(y)} {\Lambda(y)}}\longrightarrow\nu  \qquad \text{as} \quad y\downarrow{0}.
\]
    Then it follows
\[
    {J(s)}={\frac{1-f'(s)}{\Lambda(1-s)}}-1 = {\frac{(1-s)\Lambda{'}(1-s)}{\Lambda(1-s)}}
    \longrightarrow {\nu}    \qquad \text{as}\quad s \uparrow{1}
\]
    which implies \eqref{3.1}.

    To prove the second part we first write
\begin{equation}                  \label{3.4}
    {\frac{y\Lambda'(y)} {\nu\Lambda(y)}} =  1 + \delta(y),
\end{equation}
    with some continuous $\delta(y)$ being that $\delta(y) \to 0$ as $y \downarrow 0$. And then we follow
    the corresponding arguments in {\cite[p.126]{ImomovSM23}}, relying, in contrast to them, on the condition
    {\hyperref[RAO(L/x)]{$\left[\mathcal{R}_{\mathcal{L}}\right]$}}. Then we obtain in this issue that
\[
    \delta (y) = \mathcal{O}\bigl({\Lambda(y)}\bigr)
    \qquad \text{as} \quad y\downarrow{0}.
\]
    Continuing discussions in accordance with {\cite[p.126]{ImomovSM23}}, we see that
    $C_{\mathcal{L}}:=\mathcal{L}(\infty-)<\infty$ and
\begin{equation}              \label{3.5}
    {\hyperref[RAO(L/x)]{\left[\mathcal{R}_{\mathcal{L}}\right]}} \quad \Longleftrightarrow \quad {\mathcal{L}(x)} =
    C_{\mathcal{L}} + \mathcal{O}\left({x^{-\nu}}\right)  \qquad \text{as} \quad x \to \infty.
\end{equation}
    Therefore it follows $\delta (y)=\mathcal{O}\left({y^{-\nu}}\right)$ as $y \downarrow 0$. Then using
    this conclusion, combining relations \eqref{3.3} and \eqref{3.4}, we get to the estimation \eqref{3.2}.

    The lemma is proved.
\end{proof}

\begin{proof}[\textbf{Proof of Theorem~\ref{AATh2}}]
    Put
\begin{equation}              \label{3.6}
    {\mathcal{M}}_n (s):=1-{\frac{\Lambda \bigl({R_n(s)}\bigr)}{\Lambda \bigl({Q_n}\bigr)}} \raise1.2pt\hbox{.}
\end{equation}
    Using relations \eqref{1.2} and \eqref{1.4}, in {\cite[p.131]{ImomovSM23}} proved that
\[
    n{\mathcal{M}}_n (s) \longrightarrow U(s)
    \qquad \text{as}\quad  n \to \infty.
\]
    Moreover, it was shown there {\cite[p.130]{ImomovSM23}} that
\begin{equation}              \label{3.7}
    {\lim_{n \to \infty}}\,{\frac{\,1\,}{{\nu}n}}\left[{\frac{1}{{\Lambda}\bigl({R_{n}(s)}\bigr)}}
    - {\frac{1}{{\Lambda}\bigl(1-s\bigr)}}\right] = 1.
\end{equation}
    Combining \eqref{3.6} and \eqref{3.7}, we obtain
\begin{eqnarray*}
    U(s)
    & = & {\lim_{n \to \infty}}n{\mathcal{M}}_n (s)     \nonumber\\
\nonumber\\
    & = &  {\lim_{n\to\infty}}n\left[1-{\frac{{\Lambda}\bigl(1-s\bigr)}{p_0}}
    {\frac{1-p_0\nu{n}} {{\Lambda}\bigl(1-s\bigr)\nu{n}+1}}\right]=\mathcal{V}(s)-\mathcal{V}(0).
\end{eqnarray*}
    We accounted for ${\Lambda}(1)={\mathcal{L}(1)}=p_0$ in the last step. The relation \eqref{2.1} is proved.

    The proof content of second part is short due to \eqref{3.3}. Write
\[
    U'(s)=\mathcal{V}'(s)={\frac{\Lambda'\left({1-s}\right)}{\nu\Lambda^{2}\left({1-s}\right)}}=
    {\frac{1-f'(s)-\Lambda\left({1-s}\right)}{\nu\left({1-s}\right)\Lambda^{2}\left({1-s}\right)}}  \raise1.2pt\hbox{.}
\]
    The right-hand side is easily transformed to the form of those part of \eqref{2.2}.

    The theorem is proved completely.
\end{proof}

\begin{remark}
    Repeatedly use of Abel equation \eqref{1.3}, with considering of relation \eqref{2.1}, yields
\[
    {\frac{1}{{\Lambda}\bigl({R_{n}(s)}\bigr)}} - {\frac{1}{{\Lambda}(1-s)}} = \nu{n}.
\]
    It more exact refines the well-known statement mentioned in \eqref{3.7}, indicating the absence
    of the limit operation as $n\to\infty$ on the left-hand side. Then under the condition
    {\hyperref[RAfL]{$\left[{f_\Lambda}\right]$}} it follows that
\[
    {Q_n}= {\frac{\mathcal{N}_{\nu}(n)}{\left(\nu{n}\right)^{1/\nu}}}
    \left(1-\frac{1}{p_0\nu{n}}\bigl(1+o(1)\bigr)\right) \qquad \text{as}\quad n \to \infty,
\]
    where ${\mathcal{N}_{\nu}(n)}={\mathcal{L}^{-1/\nu}\left({1}\big/{Q_n}\right)}$.
\end{remark}

\begin{proof}[\textbf{Proof of Theorem~\ref{AATh3}}]
    Repeatedly using \eqref{1.3} entails $U\bigl(f_n(s)\bigr)=U(s)+n$ and hence
\[
    f'_n(s)={\frac{U'(s)}{U'\bigl(f_n(s)\bigr)}}  \raise1.2pt\hbox{.}
\]
    Using relation \eqref{2.2} in last equality, in our notation we have
\begin{equation}              \label{3.8}
    {R'_n (s)}=-{\frac{J(s)}{J\bigl({R_n(s)}\bigr)}}\,
    {\frac{R_n(s)\Lambda \bigl({R_n(s)}\bigr)}{(1-s)\Lambda(1-s)}} \raise1.2pt\hbox{.}
\end{equation}
    We recall now to relation \eqref{2.2} and the Lemma~\ref{AALem4}. Then
\begin{align}                  \label{3.9}
    \left. \begin{array}{l}
    \displaystyle{J\bigl({R_n(s)}\bigr)}=\nu + \mathcal{O}\bigl(R^{\nu}_n(s)\bigr)
    =\nu+\mathcal{O}\left(\frac{\,1\,}{n}\right) \qquad \text{as}\quad n \to \infty  \\
\\
    \text{and} \\
\\
    \displaystyle{{\frac{J(s)}{\nu(1-s)\Lambda(1-s)}}=U'(s).}   \\
    \end{array} \right\}
\end{align}
    Formulas \eqref{3.8} and \eqref{3.9} complete the proof of the theorem.
\end{proof}

\begin{proof}[\textbf{Proof of Theorem~\ref{AATh4}}]
    First we rewrite \eqref{2.4} as follows:
\[
    {R'_n(s)}=-{U'(s)}{R_n(s)\Lambda \bigl({R_n(s)}\bigr)}
    \left(1+\mathcal{O}\left(\frac{\,1\,}{n}\right)\right)  \qquad \text{as}\quad n \to \infty.
\]
    Then, since ${R'_n(0)}=-\textsf{p}_{1}(n)$, letting $s=0$ implies
\begin{equation}                  \label{3.10}
    {\textsf{p}_{1}(n)}={U'(0)}{Q_n\Lambda \bigl({Q_n}\bigr)}
    \left(1+\mathcal{O}\left(\frac{\,1\,}{n}\right)\right)  \qquad \text{as}\quad n \to \infty.
\end{equation}
    Using Lemma~\ref{AALem3} we obtain
\begin{equation}                  \label{3.11}
    {{\Lambda}\bigl({Q_{n}}\bigr)}={\frac{1}{\nu{n}}}\left(1-{\frac{1+\nu}{2\nu}}{\frac{\ln{n}}{n}}
    +o\left(\frac{\ln{n}}{n}\right)\right)   \qquad \text{as}\quad n \to \infty
\end{equation}
    and
\begin{equation}                  \label{3.12}
    {Q_n}= {\frac{\mathcal{L}^{-1/\nu}\left(1\big/{Q_n}\right)}{(\nu{n})^{1/\nu}}}
    \left(1-{\frac{1+\nu}{2\nu^{2}}}{\frac{\ln{n}}{n}}+o\left(\frac{\ln{n}}{n}\right)\right)
    \qquad \text{as}\quad n \to \infty.
\end{equation}
    Further, combining \eqref{3.10}--\eqref{3.12} produces
\begin{equation}                  \label{3.13}
    {\textsf{p}_{1}{(n)}}={u_1}{\frac{\mathcal{N}_{\nu}(n)}{(\nu{n})^{(1+\nu)/\nu}}}
    \left(1-{\frac{(1+\nu)^2}{2\nu^2}}\frac{\ln{n}}{n}+o\left(\frac{\ln{n}}{n} \right) \right)
    \qquad \text{as}\quad n \to \infty,
\end{equation}
    where ${\mathcal{N}_{\nu}(n)}={\mathcal{L}^{-1/\nu}\left(1\big/{Q_n}\right)}$ and $u_1$ is defined in \eqref{2.3}.
    It is known that ${Q_n}=\mathcal{N}(n)\big/{(\nu{n})^{1/\nu}}$ which is a result of Lemma~\ref{AALem1},
    where $\mathcal{N}(\ast)$ is SV at infinity with the asymptotic property \eqref{1.5}.
    In accordance with this property we write that
\[
    1={\mathcal{N}_{\nu}(n)}\cdot\mathcal{L}^{{1/\nu}}\left({\frac{1}{Q_n}} \right)=
    {\mathcal{N}_{\nu}(n)}\cdot\mathcal{L}^{{1/\nu}}\left({\frac{(\nu{n})^{{1/\nu}}}{\mathcal{N}(n)}}\right)
    \sim{\frac{\mathcal{N}_{\nu}(n)}{\mathcal{N}(n)}}
    \qquad \text{as}\quad n \to \infty.
\]
    Then it follows
\[
    {\mathcal{N}_{\nu}(n)} \cdot \mathcal{L}^{{1/\nu}} \left({\frac{(\nu{n})^{{1/\nu}}}{\mathcal{N}_{\nu}(n)}} \right)
    \longrightarrow{1}   \qquad \text{as}\quad n \to \infty.
\]
    But by virtue of \eqref{3.5}
\[
    {\mathcal{N}_{\nu}(n)}= C_{\mathcal{N}} + \mathcal{O}\left({n^{-\nu}}\right)  \qquad \text{as} \quad n \to \infty,
\]
    where $C_{\mathcal{N}}=C_{\mathcal{L}}^{-1/\nu}$. Recalling now denotation
    ${\mathcal{P}_{\nu}(n)}:={(\nu{n})^{(1+\nu)/\nu}}{\textsf{p}_{1}{(n)}}$,
    we transform the asymptotic relation \eqref{3.13} to the form of \eqref{2.5}.

    The proof is completed.
\end{proof}



\begin{thebibliography}{99}

\bibitem{ANey}
    {\sc Athreya, K.~B. and  Ney, P.~E.} (1972). {\em Branching processes.} Springer, New York.

\bibitem{Bingham}
    {\sc Bingham, N.~H., Goldie, C.~M. and Teugels, J.~L.} (1987). {\em Regular variation.} Cambridge.

\bibitem{Harris63}
    {\sc Harris, T.~E.} (1963). {\em The theory of branching processes}, Springer-Verlag.

\bibitem{ImomovTukhtaev19}
    {\sc Imomov, A.~A. and Tukhtaev, E.~E.} (2019). On application of slowly varying functions with
    remainder in the theory of Galton-Watson branching process.
    {\em Jour. Siber. Fed. Univ.: Math. Phys.} \textbf{12(1)}, 51--57.

\bibitem{Imomov19}
    {\sc Imomov, A.~A.} (2019). On a limit structure of the Galton-Watson branching processes
    with regularly varying generating functions.
    {\em Prob. and math. stat.} \textbf{39(1)}, 61--73.

\bibitem{ImomovSM23}
    {\sc Imomov, A.~A. and Tukhtaev, E.~E.} (2023).On asymptotic structure of critical Galton-Watson branching processes
    allowing immigration with infinite variance. {\em Stochastic Models} \textbf{39(1)}, 118--140, DOI: 10.1080/15326349.2022.2033628.

\bibitem{KNS}
    {\sc Kesten, H. Ney, P.~E. and Spitzer, F.~L.} (1966). The Galton–Watson process with mean one and finite variance.
    {\em Jour. Appl. Prob.} \textbf{11(4)}, 579--611.

\bibitem{Seneta67}
    {\sc Seneta, E.} (1967). The Galton-Watson process with mean one.
    {\em Jour. Appl. Prob.} \textbf{4}, 489--495.

\bibitem{Slack68}
    {\sc Slack, R.~S.} (1968). A branching process with mean one and possible infinite variance.
    {\em Wahrscheinlichkeitstheor. und Verv. Geb.} \textbf{9}, 139--145.

\end{thebibliography}
\end{document}